\numberwithin{equation}{section}
\newcommand{\Assouad}{\dim_{\mathrm{A}}}
\renewcommand{\epsilon}{\varepsilon}
\newtheorem{theorem}{Theorem}[section]
\newtheorem{question}[theorem]{Question}
\begin{document}

\title[Asymptotically and omnidirectionally containing APs]{Construction of a one-dimensional set which asymptotically and omnidirectionally contains arithmetic progressions}

\author[K. Saito]{ Kota Saito }
\address{Kota Saito\\
Graduate School of Mathematics\\ Nagoya University\\ Furocho\\ Chikusa-ku\\ Nagoya\\ 464-8602\\ Japan }
\curraddr{}
\email{m17013b@math.nagoya-u.ac.jp}
\thanks{This paper will appear at \textit{Journal of Fractal Geometry}.}

\subjclass[2010]{Primary: 28A80; Secondary 11B25.}

\keywords{Arithmetic progressions, Arithmetic patches, discrete Kakeya problem, Assouad dimension}
\maketitle

\begin{abstract}
 In this paper, we construct a subset of $\mathbb{R}^d$ which asymptotically and omnidirectionally contains arithmetic progressions but has Assouad dimension 1. More precisely, we say that $F$ asymptotically and omnidirectionally contains arithmetic progressions if we can find an arithmetic progression of length $k$ and gap length $\Delta>0$ with direction $e\in S^{d-1}$ inside the $\epsilon \Delta$ neighbourhood of $F$ for all $\epsilon>0$, $k\geq 3$ and $e\in S^{d-1}$. Moreover, the dimension of our constructed example is the lowest-possible because we prove that a subset of $\mathbb{R}^d$ which asymptotically and omnidirectionally contains arithmetic progressions must have Assouad dimension greater than or equal to 1. We also get the same results for arithmetic patches, which are the higher dimensional extension of arithmetic progressions.  
\end{abstract}

\section{Introduction}

Let $\{e_1, \dots, e_m\}$ be a set of linearly independent vectors in $\mathbb{R}^d$ where $1\leq m \leq d$, $m,d\in \mathbb{N}$. For $k\in \mathbb{N}$ and $\Delta>0$ we say that a set $P\subset \mathbb{R}^d$ is an {\it arithmetic patch (AP) } of size $k$ and scale $\Delta$ with respect to orientation $\{e_1,\ldots,e_m\}$ if 
\[
	P=\left\{t+\Delta\sum_{i=1}^m x_ie_i \ \colon \ x_i=0,1,\ldots,k-1 \right\}
\] 
for some $t\in \mathbb{R}^d$. We say that this $t$ is the initial point of $P$. For $\epsilon\in [0,1)$, we say that $Q \subset \mathbb{R}^d$ is a $(k, \varepsilon, \{e_1,\ldots,e_m\})${\it -AP} if there exists an arithmetic patch $P$ of size $k$, and scale $\Delta >  0$ with respect to orientation $\{e_1,\ldots,e_m\}$, such that
\begin{equation}\label{f1.1}
\# Q=\# P 
\end{equation}
and
\begin{equation}\label{f1.2}
\inf_{y \in Q} \|x-y\| \leq \epsilon \Delta 
\end{equation}
for all $x\in P$, where $\|\cdot\|$ denotes the Euclidean norm on $\mathbb{R}^d$. If $t$ is the initial point of $P$, we can take $t'\in Q$ such that $\|t-t'\|\leq \epsilon \Delta$. This $t'$ is uniquely determined if $\varepsilon>0$ is sufficiently small. Then this $t'$ is called the initial point of $Q$. 

Note that a $(k,0,\{e_1,\dots,e_m\})$-AP is an arithmetic patch of size $k$ with orientation $\{e_1,\ldots,e_m\}$. This notion was first introduced by Fraser and Yu \cite{FraserYu}, and refined by Fraser, the author and Yu \cite{FraserSaitoYu}. We say that $F\subset \mathbb{R}^d$ \emph{asymptotically and omnidirectionally contains arithmetic progressions} if for all $k\geq 3, \epsilon\in(0,1) $ and $e\in S^{d-1}$ there exists a $(k,\epsilon, \{e\})$-AP $Q\subset \mathbb{R}^d$ such that $Q\subseteq F$, where $S^{d-1}$ denotes the unit sphere in $\mathbb{R}^d$. \\

Fraser, the author and Yu asked the following question \cite[Question 6.2]{FraserSaitoYu}:

\begin{question}[a discrete analogue of the Kakeya problem]\label{Ques1}
Let $\epsilon \in (0,1)$ and suppose $F \subseteq \mathbb{R}^d$ contains a $(k, \varepsilon, \{e\})$-AP for every direction $e \in S^{d-1}$ and every $k \geq 3$.  Is it true that the Assouad dimension of $F$ is necessarily equal to $d$?
\end{question}

Additionally we discuss the following question:
 
\begin{question}\label{Ques2}
Suppose $F \subseteq \mathbb{R}^d$ asymptotically and omnidirectionally contains arithmetic progressions. Is it true that the Assouad dimension of $F$ is necessarily equal to $d$?
\end{question}

The goal of this paper is to prove the negative answer to Question \ref{Ques1} and Question \ref{Ques2}. To achieve this goal,  in Section \ref{proof2} we will construct a subset of $\mathbb{R}^d$ which asymptotically and omnidirectionally contains arithmetic progressions but has Assouad dimension 1. Moreover we will prove that the dimension of our constructed example is the lowest-possible in Section  \ref{proof1}. We will briefly recall the definition of Assouad dimension in Section \ref{proof1}, but refer the reader to \cite{Fraser, Robinson} for more details. It is well-known that the Assouad dimension is always an upper bound for the Hausdorff dimension, and (for bounded sets) the upper box dimension. We refer the reader to \cite{Falconer} for more background on Hausdorff and box dimensions.\\

From \cite[Theorem 2.4]{FraserYu} it immediately follows that if $F\subseteq\mathbb{R}^d$ has Assouad dimension $d$, then $F$ asymptotically and omnidirectionally contains arithmetic progressions. Thus the affirmative answer to Question \ref{Ques2} would imply the converse of this statement. In addition, this question is related to the Kakeya problem which is to prove that if a compact set $K \subseteq \mathbb{R}^d$ contains a unit line segment in every direction then it necessarily has Hausdorff dimension $d$. The affirmative answer to Question \ref{Ques2} would imply that every Kakeya set has Assouad dimension $d$. Kakeya-type problems are connected to many other problems (see \cite{Tao}). 

 In the case $d=1$, the answer to Question \ref{Ques2} is ``yes'' from \cite[Theorem 2.4]{FraserYu}. Surprisingly we prove that the answer to Question \ref{Ques2} is ``no'' in the case $d\geq 2$.\\

\section{Main results}\label{main}

\begin{theorem}\label{Th1}
Let $d\geq 2$ be an integer. There exists a set $K\subset \mathbb{R}^d$ such that 
\begin{itemize}
\item[(i)] $K$ is compact,
\item[(ii)] $K$ asymptotically and omnidirectionally contains arithmetic progressions,
\item[(iii)] $\Assouad K=1$.
\end{itemize}

\end{theorem}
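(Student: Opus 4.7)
The plan is to construct $K$ as the union of $\{0\}$ with a countable family of short, pairwise disjoint line segments whose directions form a dense subset of $S^{d-1}$. Fix a countable dense set $\{e_j\}_{j=1}^\infty\subset S^{d-1}$ and a unit vector $\mathbf{u}\in S^{d-1}$, and put
\[
L_j := \{2^{-j}\mathbf{u} + t e_j : 0\le t\le 4^{-j}\},\qquad K := \{0\}\cup\bigcup_{j=1}^\infty L_j.
\]
Since $L_j\subset B(0,2^{-j+1})$ and the length $\ell_j:=4^{-j}\to 0$, the set $K$ is closed and bounded, giving (i). For (ii), given $e\in S^{d-1}$, $\epsilon\in(0,1)$, $k\ge 3$, density yields $j$ with $\|e-e_j\|<\epsilon/(k-1)$; then with $\Delta:=\ell_j/(k-1)$ the set $Q:=\{2^{-j}\mathbf{u}+i\Delta e_j : i=0,\ldots,k-1\}\subseteq L_j\subseteq K$ is a $(k,\epsilon,\{e\})$-AP, since comparison term-by-term with the genuine length-$k$ AP $P=\{2^{-j}\mathbf{u}+i\Delta e : i=0,\ldots,k-1\}$ in direction $e$ gives error $i\Delta\|e-e_j\|<\epsilon\Delta$.

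The bulk of the work is (iii). The lower bound $\Assouad K\ge 1$ is immediate because $K$ contains a line segment. For the upper bound I would show that for every $x\in K$ and $0<r\le R$, the covering number $N_r(K\cap B(x,R))\le C_\delta(R/r)^{1+\delta}$ for every $\delta>0$. The design makes the $L_j$ well separated: consecutive starting points $p_j=2^{-j}\mathbf{u}$ are at distance $2^{-j-1}$, while $\ell_j=4^{-j}$ is much smaller. The analysis splits by the location of $x$. In the key case $x=0$, only those $L_j$ with $j\gtrsim \log_2(1/R)$ meet $B(0,R)$; their total length is a geometric series of order $R^2$, so a single ball at the origin absorbs every $L_j$ that fits inside $B(0,r)$, and only $O(\log_2(R/r))$ segments require individual covers, yielding a bound of order $R^2/r+\log_2(R/r)=O((R/r)^{1+\delta})$. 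If $x\in L_n$ with $R\le\ell_n$, then $K\cap B(x,R)$ lies inside a single line segment and needs $O(R/r)$ balls. If $x\in L_n$ with $R>\ell_n$, then $\|x\|\le 2^{-n}+4^{-n}\le CR$, so $B(x,R)\subset B(0,C'R)$ and the origin-centred estimate applies with $R$ inflated by a constant.

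The principal obstacle is this Assouad upper bound: since Assouad dimension is not stable under countable unions, one cannot simply add up the individual estimates for each $L_j$. What rescues the argument is the carefully calibrated ratio between segment length $\ell_j=4^{-j}$ and segment position $2^{-j}$—decay of $\ell_j$ fast enough that the extra content added at each scale is negligible, decay of position slow enough that the accumulation at $0$ remains genuinely one-dimensional. Any sub-optimal choice here (for instance, placing all segments at the origin, or letting $\ell_j$ decay only geometrically) would inflate the covering count and push the Assouad dimension above $1$.
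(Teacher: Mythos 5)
Your construction and overall strategy are essentially those of the paper (a countable union of shrinking segments with dense directions accumulating at a point, plus the point itself), and parts (i), (ii), the lower bound in (iii), and the origin-centred covering estimate are all fine. But there is a genuine gap in your case analysis for the Assouad upper bound, and it is caused precisely by your recalibration of the segment lengths. You split the case $x\in L_n$ according to whether $R\le\ell_n=4^{-n}$ or $R>\ell_n$, and in the second case you claim $\|x\|\le 2^{-n}+4^{-n}\le CR$ for a uniform constant $C$. This fails: in the regime $4^{-n}<R\ll 2^{-n}$ (which is nonempty and in fact huge for large $n$, exactly because you made $\ell_n=4^{-n}$ much smaller than the position $2^{-n}$), one has $\|x\|/R\approx 2^{-n}/4^{-n}=2^{n}\to\infty$, so $B(x,R)$ is not contained in $B(0,C'R)$ for any uniform $C'$, and pushing the estimate through would cost an unbounded factor $2^{n}$. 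The conclusion is still true in that regime --- $B(x,R)\cap K$ is still essentially a single segment as long as $R\lesssim 2^{-n}$ --- but your argument does not establish it. The fix is to place the case boundary at $R\le c\,2^{-n}$ (the distance from $L_n$ to the rest of $K$), not at $R\le\ell_n$. In the paper's construction the segment length $2^{-(j+1)}$ is comparable to the position $2^{-j}$, so these two thresholds coincide up to constants and the issue never arises; the paper splits at $R<2^{-(j+3)}$.

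Two smaller points. First, your segments need not be pairwise disjoint for small $j$ (a segment $L_j$ pointing toward the origin can reach $L_{j+1}$ when $4^{-j}\ge 2^{-j-1}$); this is harmless for the upper bound but means the clean dichotomy ``$B(x,R)\cap K\subseteq L_n$'' needs a finite-index exception or a slightly larger constant. Second, your closing remark --- that letting $\ell_j$ ``decay only geometrically'' would push the Assouad dimension above $1$ --- is contradicted by the paper itself: its segments have length $2^{-(j+1)}$, decaying at the same geometric rate as the positions $2^{-j}$, and the total length inside $B(0,R)$ is then $O(R)$ rather than $O(R^2)$, which still yields $N(B(0,R)\cap K,r)\ll R/r$ and $\Assouad K=1$. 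Your faster decay $4^{-j}$ is not needed; what matters is only that the lengths are summable at scale $R$ like $O(R)$ and that the case split is made at the separation scale $2^{-n}$.
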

Here $\Assouad K$ denotes the Assouad dimension of $K$, whose precise definition will be given in Section \ref{proof2}. We will prove this theorem in Section \ref{proof2}.\\

Let $m,d$ be integers such that $1\leq m\leq d$, and let 
\[
	B^d_m=\big\{\{e_1,\ldots,e_m\}\ \colon\ \{e_1,\ldots, e_m\}\subset \mathbb{R}^d\text{  is linearly independent} \big\}.
\]    
For every $\{e_1,\ldots,e_m\}\in B^d_m$, we define
\[
	\ell(\{e_1,\ldots,e_m\})=\min_{\omega\in\{0,1\}^m\setminus \{0\}^m}\|\omega_1e_1+\cdots+\omega_m e_m\|\:,
\]
and
\[
L(\{e_1,\ldots,e_m\})=\max_{\omega\in\{0,1\}^m}\|\omega_1e_1+\cdots+\omega_m e_m\|\:
\]
where $\omega=(\omega_1,\ldots,\omega_m)$.

\begin{theorem}\label{Th2}
Fix $\{e_1\ldots,e_m\} \in B^d_m$ and $0\leq \epsilon <\ell(\{e_1,\ldots,e_m\})/2$. If $F\subseteq \mathbb{R}^d$ contains $(k,\epsilon, \{e_1,\ldots,e_m\})$-APs for all $k\geq 3$, then $\Assouad  F\geq m$.
\end{theorem}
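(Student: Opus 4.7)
The plan is to use the covering-number characterization of the Assouad dimension: $\Assouad F$ is the infimum of exponents $s$ for which there exists a $C > 0$ with $N_r(F \cap B(x, R)) \leq C(R/r)^s$ for all $x \in F$ and $0 < r < R$, where $N_r(E)$ denotes the minimum number of balls of radius $r$ needed to cover $E$. To prove $\Assouad F \geq m$, I will exhibit, for each $k \geq 3$, a point $x_k \in F$ and scales $0 < r_k < R_k$ satisfying
\[
N_{r_k}\bigl(F \cap B(x_k, R_k)\bigr) \;\geq\; c\, k^m \qquad \text{and} \qquad R_k / r_k \;\leq\; C_1 k,
\]
where the constants $c, C_1 > 0$ depend only on $\{e_1, \ldots, e_m\}$ and $\varepsilon$. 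Sending $k \to \infty$ then precludes any Assouad exponent below $m$.

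Applying the hypothesis, for each $k \geq 3$ I obtain a $(k, \varepsilon, \{e_1, \ldots, e_m\})$-AP $Q_k \subseteq F$ with associated arithmetic patch $P_k = t_k + \Delta_k\, \psi(\{0, \ldots, k-1\}^m)$, where $\psi(x) := \sum_{i=1}^m x_i e_i$ is a linear injection by the assumed linear independence of $\{e_i\}$. The main combinatorial ingredient is a lattice-packing estimate for $P_k$: since $\Lambda := \psi(\mathbb{Z}^m)$ is a rank-$m$ lattice in $\mathrm{span}(e_1, \ldots, e_m)$, its shortest vector $\lambda_1 := \min_{z \in \mathbb{Z}^m \setminus \{0\}} \|\psi(z)\|$ is strictly positive, and the standard disjoint-balls argument (open balls of radius $\lambda_1/2$ around lattice points pack inside a slightly enlarged ball) bounds the number of points of $\Lambda$ in any Euclidean ball of radius $s$ by $(1 + 2s/\lambda_1)^m$. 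Rescaling by $\Delta_k$, every ball of radius $r$ in $\mathbb{R}^d$ meets $P_k$ in at most $(1 + 2r/(\lambda_1 \Delta_k))^m$ points, whence
\[
N_{(1 + \varepsilon) \Delta_k}(P_k) \;\geq\; \frac{|P_k|}{(1 + 2(1 + \varepsilon)/\lambda_1)^m} \;=\; c_0\, k^m
\]
for some constant $c_0 = c_0(\{e_i\}, \varepsilon) > 0$.

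To transfer this bound to $F$, set $Q_k' := Q_k \cap N_{\varepsilon \Delta_k}(P_k) \subseteq F$. Condition (1.2) guarantees that for each $p \in P_k$ some closest $Q_k$-point lies within distance $\varepsilon \Delta_k$ of $p$ — and hence in $Q_k'$ — so $P_k \subseteq N_{\varepsilon \Delta_k}(Q_k')$. Any cover of $Q_k'$ by balls of radius $\Delta_k$ therefore inflates to a cover of $P_k$ by balls of radius $(1 + \varepsilon)\Delta_k$, giving
\[
N_{\Delta_k}(Q_k') \;\geq\; N_{(1 + \varepsilon) \Delta_k}(P_k) \;\geq\; c_0\, k^m.
\]
Taking $x_k$ to be the initial point of $Q_k$ (which lies in $Q_k'$ because it is within $\varepsilon \Delta_k$ of $t_k \in P_k$) and using $\mathrm{diam}(P_k) = O(k \Delta_k)$ together with $Q_k' \subseteq N_{\varepsilon \Delta_k}(P_k)$, I can enclose $Q_k'$ in $B(x_k, R_k)$ for some $R_k = O(k \Delta_k)$. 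Setting $r_k := \Delta_k$ then delivers the promised estimates.

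The only non-routine step is the lattice-packing estimate, which requires careful bookkeeping of constants but is otherwise standard. I note in passing that the hypothesis $\varepsilon < \ell(\{e_1, \ldots, e_m\})/2$ does not enter this argument directly: it guarantees good resolution of the $\{0, 1\}^m$-sub-patch of $Q_k$, but that sub-patch on its own produces only $2^m$ separated points of $F$, which is far short of dimension $m$; it is the full $k^m$-point structure of $P_k$ that yields the exponent $m$.
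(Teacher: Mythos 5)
Your proof is correct, and while the overall strategy (extract a $k^m$-point configuration from $F$ inside a ball of radius $R_k\sim k\Delta_k$ and cover at scale $r_k\sim\Delta_k$) coincides with the paper's, the key counting step is carried out differently. The paper lower-bounds $N(Q_k,r_k)$ directly by showing that distinct points of $Q_k$ are pairwise separated by at least $(\ell(E)-2\epsilon)\Delta_k$, so that with $r_k=(\ell(E)-2\epsilon)\Delta_k/2$ no set of diameter $r_k$ can contain two points of $Q_k$; this is exactly where the hypothesis $\epsilon<\ell(\{e_1,\dots,e_m\})/2$ is used, since otherwise the separation constant degenerates. You instead push the covering problem back onto the exact patch $P_k$ by inflating a cover of $Q_k'$ by the quantity $\epsilon\Delta_k$, and then invoke a lattice-packing bound for $\psi(\mathbb{Z}^m)$ with shortest vector $\lambda_1>0$. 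Your closing remark is accurate: your route never needs $\epsilon<\ell(E)/2$, so it proves the slightly stronger statement valid for every $\epsilon\in[0,1)$, at the cost of the extra packing lemma and a constant $c_0$ depending on $\lambda_1$ rather than on $\ell(E)$; the paper's argument is shorter but tied to the separation hypothesis. Two cosmetic points: your notation $N_{\epsilon\Delta_k}(P_k)$ is used both for a covering number and for a neighbourhood in the same display, which you should disambiguate, and you should note explicitly that the infimum in (1.2) is attained because $Q_k$ is finite, so the ``closest $Q_k$-point'' genuinely lies within $\epsilon\Delta_k$.
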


From Theorem, \ref{Th2}, it follows that there does not exist a set which satisfies (i),(ii) in Theorem \ref{Th1} and has Assouad dimension strictly less than 1. Therefore the constructed example of Theorem \ref{Th1} is of the lowest-possible dimension.

\section{Proofs }\label{proof1}\label{proof2}
We now briefly recall the definition of Assouad dimension. For every bounded set $E \subset \mathbb{R}^d$ $(d \in \mathbb{N})$ and $r>0$, let $N (E, r)$ be the smallest number of open sets with diameter less than or equal to $r$ required to cover $E$.  The \emph{Assouad dimension} of a set $F\subseteq \mathbb{R}^d$ is defined by
\begin{align*}
\Assouad F & =   \inf \Bigg\{ \  \sigma \geq 0 \  \colon \ \text{      $ (\exists \, C>0)$ $(\forall \, R>0)$ $(\forall \, r \in (0,R) )$ $(\forall \, x \in F )$ } \\ 
&\, \hspace{45mm} \text{ $N\big( B(x,R) \cap F , r\big) \ \leq \ C \bigg(\frac{R}{r}\bigg)^\sigma$ } \Bigg\},
\end{align*}
where $B(x,R)$ denotes the closed ball centered at $x$ with radius $R$.\\

\begin{proof}[Proof of Theorem \ref{Th2}]
Fix integers $m$ and $d$ with $1\leq m\leq d$. Let $E=\{e_1,\ldots,e_m\}$. For every $k\geq 3$ we can take a $(k,\epsilon,E)$-AP $Q_k$ of $F$. Let $P_k$ be a $(k,0,E)$-AP which is approximated by $Q_k$, that is, $P_k$ satisfies (\ref{f1.1}) and (\ref{f1.2}). Let $\Delta_k>0$ be the scale of $P_k$ and $t_k\in F$ be the initial point of $Q_k$. We define $R_k=(1+2\epsilon)k\Delta_k(L(E)+1) $ and $r_k=(\ell(E)-2\epsilon)\Delta_k/2$ for every $k\geq 3$.  The diameter of $Q_k$ is at most 
\[
	\Delta_k(k-1)L(E)+2\epsilon \Delta_k \leq R_k
\]
for all $k\geq 3$. Hence we obtain
\begin{equation}\label{Th2f1}
	N\big(Q_k\cap B(t_k, R_k),r_k\big)= N\big(Q_k,r_k\big) \\
\end{equation}
for every $k\geq 3$ since $Q_k\cap B(t_k, R_k)=Q_k$. We take an open cover $\mathcal{U}$ of $Q_k$ such that
$\sup_{x,y\in U} \|x-y\|\leq r_k$ for all $U\in \mathcal{U}$. Since $\inf_{x,y\in Q_k, x\neq y}\|x-y\|\geq (\ell(E)-2\epsilon)\Delta_k>r_k$, we find that $U$ contains at most one point of $Q_k$ for every $U\in \mathcal{U}$. Therefore we have
\begin{equation}\label{Th2f2}
	N\big(Q_k,r_k\big)\geq \# Q_k =k^m=C \left(\frac{R_k}{r_k}\right)^m 
\end{equation}
for all $k\geq 3$, where let
\[
	C=\left(\frac{\ell(E)-2\epsilon}{2(1+2\epsilon)(L(E)+1)}\right)^m.
\]
Fix any $\sigma>\Assouad F$. From the definition of Assouad dimension, (\ref{Th2f1}), and (\ref{Th2f2}), there exists some constant $C_\sigma>0$ such that
\[
	C \left(\frac{R_k}{r_k}\right)^m\leq N\big(Q_k\cap B(t_k, R_k),r_k\big)\leq N(F\cap B(t_k,R_k),r_k)\leq C_{\sigma} \left(\frac{R_k}{r_k}\right)^\sigma
\]
for all $k\geq 3$. Since we find that $R_k/r_k \rightarrow \infty$ as $k\rightarrow\infty$, we have
$\sigma \geq m$. Therefore it follows that $\Assouad F\geq m$ by taking $\sigma\rightarrow \Assouad F +0$.  
\end{proof}

\begin{proof}[Proof of Theorem \ref{Th1}]
Fix an integer $d\geq 2$. Let $S'=\big\{x/\|x\|\: \colon \: x\in\mathbb{Q}^d\setminus\{0\} \big\}$. We can write $S'=\{\xi_0, \xi_1,\ldots\}$ since $S'$ is countable. Let $e_1=(1,0,\ldots, 0)\in \mathbb{R}^d$. For every $j=0,1,2,\ldots$, we define
\[
	T_j=\left\{t\xi_j +\frac{1}{2^j} e_1\ \colon\ t\in \left[-\frac{1}{2^{j+2}},\frac{1}{2^{j+2}}\right] \right\},
\] 
and
\[
	T_\infty =\{0\}\;.
\]
Note that $T_j$'s are line segments of length $1/2^{j+1}$ with direction $\xi_j$, and $T_j$'s do not have any intersections with each other. Let
\[
	K=\bigcup_{j\in \mathbb{N}_{0}^{*}} T_j 
\]
where $\mathbb{N}_{0}^{*}=\{0,1,2\ldots\}\cup \{\infty\} $. We will show that $K$ satisfies desired properties (i),(ii), and (iii). \\
	
	Firstly we show that $K$ satisfies (i) $i.e.$ $K$ is compact. If we take any sequence $\{a_j\}_{j=1}^\infty \subset K$, then either $\{a_j\}_{j=1}^\infty$ is contained a finite union of line segments or it travels through $T_{j_k}$
for infinitely many $j_k$. In the first case, there exists a subsequence $\{a_{j_\ell}\}_{\ell=1}^\infty$ which is convergent in $K$ from the  finiteness of the union. In the second case, there exists a subsequence $\{a_{j_\ell}\}_{\ell=1}^\infty$ converging to 0 from the definition of $K$. Therefore $K$ is compact. \\

	We next show that $K$ satisfies (ii) $i.e.$  $K$ contains $(k,\epsilon, \{e\})$-APs for all $k\geq 3$, $\epsilon\in(0,1)$, and $e\in S^{d-1}$. Fix $k\geq 3$ and $\epsilon\in(0,1)$. For every $e\in S^{d-1}$ we can take $\xi_{j_0}\in S'$ such that
\[
	\left\|e-\xi_{j_0} \right\|<\frac{2\epsilon}{k-1}\:
\]
since $S'$ is dense in $S^{d-1}$.
Here we define 
\[
	P_i=\left(-\frac{1}{2^{j_0+2}}+\frac{i}{(k-1)2^{j_0+1}}\right)e+\frac{1}{2^{j_0}}e_1 
\]
for all $i= 0,1,\ldots, k-1$. This sequence is a $(k,0,\{e\})$-AP of scale $1/((k-1)2^{j_0+1})$. We also define
\[
	Q_i=\left(-\frac{1}{2^{j_0+2}}+\frac{i}{(k-1)2^{j_0+1}}\right)\xi_{j_0}+\frac{1}{2^{j_0}}e_1 
\]
for all $i= 0,1,\ldots, k-1$. Note that $Q_i$'s are points in $T_{j_0}$ which partition $T_{j_0}$ into $k-1$ small line segments of same length. Let $\Delta=  1/((k-1)2^{j_0+1})$ which is the scale of $\{P_i\}_{i=0}^{k-1}$. Then we see that
\[
	\|P_i-Q_i\| \leq \left|-\frac{1}{2^{j_0+2}}+\frac{i}{(k-1)2^{j_0+1}}\right| \|e-\xi_{j_0}\| \leq \epsilon \Delta
\]
for all $i=0,1,\ldots, k-1$. Therefore $\{Q_i\}_{i=0}^{k-1}$ is a $(k,\epsilon, \{e\})$-AP, which proves the desired property.\\

	We last show that $K$ satisfies (iii) $i.e.$ $\Assouad K=1$. It is clear that $1\leq \Assouad K$ from Theorem \ref{Th2}. We now prove $\Assouad K\leq 1$.

Let $x\in K$. Then there exists $j\in \mathbb{N}^{*}_{0}$ such that $x\in T_{j}$. If $j=\infty$, then $x=0$ from the definition of $K$. Let $n(R)=\lfloor - \log R /\log 2 \rfloor $ for all $R>0$, where $\lfloor y\rfloor$ denotes the greatest integer less than or equal to $y$. Note that $1/2^{n(R)+1}<R\leq 1/2^{n(R)} $ for all $R>0$. Let $n^{+}(R)=\max\{0, n(R)\}$. Then we have
\begin{align*}
N\big(B(x,R)\cap K,r\big) &= N\big(B(0,R)\cap K,r\big)\\
&\leq N\big( T_{n^{+}(R)} \cup T_{n^{+}(R)+1} \cup \cdots \cup T_{\infty},r\big)\\
&\leq N\big(T_{\infty} , r)+\sum_{\ i\geq n^{+}(R)} N\big( T_i, r \big)\\
&\ll 1+\sum_{\ i\geq n^{+}(R)} \frac{1}{r2^{i+1}}  
\end{align*}
for all $0<r<R$, where for non-negative functions $f(R,r)$ and $g(R,r)$, we write $f(R,r)\ll g(R,r)$ for all $0<r<R$ if there exists some constant $C$ such that $f(R,r)\leq C g(R,r)$ for all $0<r<R$.
  Since $0\leq\ell(R)$ for all $ 0<R\leq 1$, we have
\[
	 1+\sum_{\ i\geq n^{+}(R)} \frac{1}{r2^{i+1}} \leq R/r +\frac{1}{r2^{n(R)}}\ll R/r
\]
for all $0<r<R\leq 1$. Since $n(R)<0$ for all $R>1$, we have
\[
 	1+\sum_{\ i\geq n^{+}(R)} \frac{1}{r2^{i+1}} =1+\sum_{i\geq 0} \frac{1}{r2^{i+1}}  \leq R/r +\frac{1}{r}\leq 2R/r
\]
for all $1<R$ and $0<r<R$. 
Therefore we obtain
\begin{equation}\label{f1Th4}
	N\big(B(x,R)\cap K,r\big)=N\big(B(0,R)\cap K,r\big) \ll R/r
\end{equation}
for all $0<r<R$.

If $j\neq \infty$ and $R<1/2^{j+3}$, then we see that $B(x,R)\cap K =B(x,R)\cap T_j$ from the definition of $K$. Therefore we have
\[
	N\big(B(x,R)\cap K,r\big)= N\big(B(x,R)\cap T_j,r\big) \leq N\big([-R,R],r\big)\ll R/r
\]   
for all $0<r<R<1/2^{j+3}$.
If $j\neq \infty$ and $R\geq1/2^{j+3}$, then we obtain
$B(x,R)\subset B\big(0,11R\big)$. In fact,
for all $y\in B(x,R)$ we have 
\[
	\|y\|\leq \|y-x\|+\|x-1/2^{j}e_1\|+\|1/2^{j}e_1\|\leq R+1/2^{j+2} +1/2^j\leq 11R  
\]
since $x\in T_j$ and $R\geq1/2^{j+3}$. Thus it follows that 
\[
	N\big(B(x,R)\cap K,r\big) \leq 
	N\big(B(0,11R)\cap K,r\big) 
\]
for all $1/2^{j+3}\leq R$ and $0<r<R$. From (\ref{f1Th4}), we have
\[
	N\big(B(0,11R)\cap K,r\big) \ll R/r\;
\] 
for all  $1/2^{j+3}\leq R$ and $0<r<R$. Therefore we have 
\[
	N\big(B(x,R)\cap K , r\big) \ll R/r
\]
for all $x\in K$ and $0<r<R$, which implies $\Assouad K \leq 1$ from the definition of Assouad dimension. 
\end{proof}

\section{The higher dimensional analogue}

We also get the higher dimensional result. 

\begin{theorem}\label{Th3}
Let $d\geq 2$ be an integer, and $1\leq m\leq d$ be an integer. There exists a set $K\subset \mathbb{R}^d$ such that 
\begin{itemize}
\item[(i)] $K$ is compact,
\item[(ii)] $K$ contains $(k,\epsilon, \{e_1,\ldots,e_m\})$-APs for all $k\geq 3$, $\epsilon\in(0,1)$, and \\ $\{e_1,\ldots, e_m\}\in B^{d}_m$,
\item[(iii)] $\Assouad K=m$.
\end{itemize}
\end{theorem}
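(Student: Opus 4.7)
The plan is to mimic the construction of Theorem \ref{Th1} by replacing each one-dimensional line segment $T_j$ with an $m$-dimensional parallelepiped whose orientation ranges over a countable dense family in $B^d_m$. Concretely, I would enumerate a countable dense set $\{\Xi_0, \Xi_1, \ldots\}$ in $B^d_m$ (for instance, all $m$-tuples of linearly independent rational unit vectors), write $\Xi_j = \{\xi_j^{(1)}, \ldots, \xi_j^{(m)}\}$, and define
\[
T_j = \frac{1}{2^j} e_1 + \Bigl\{\, \sum_{i=1}^m t_i \xi_j^{(i)} : |t_i| \leq c_j \,\Bigr\},
\]
where $c_j$ is a small constant (e.g.\ $c_j = 1/(m 2^{j+2})$) chosen so that $T_j \subset B(e_1/2^j, 1/2^{j+2})$; the gap $\|e_1/2^j - e_1/2^{j+1}\| = 1/2^{j+1}$ then ensures the $T_j$ are pairwise disjoint. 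Set $T_\infty = \{0\}$ and $K = \bigcup_{j \in \mathbb{N}_0^*} T_j$.

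Compactness of $K$ and the containment of the required APs follow the arguments of Theorem \ref{Th1} with essentially no change: for compactness one splits a sequence in $K$ into one lying in finitely many $T_j$ (use compactness of each $T_j$) or one visiting infinitely many (subsequential limit $0$); for the AP property, given $\{e_1, \ldots, e_m\} \in B^d_m$, $k\geq 3$, and $\epsilon \in (0,1)$, I would approximate the orientation by some $\Xi_{j_0}$ with each $\xi_{j_0}^{(i)}$ close enough to $e_i$ and realise the AP of scale $\Delta = 1/((k-1) 2^{j_0+1})$ on a rectangular grid inside $T_{j_0}$, verifying \eqref{f1.1} and \eqref{f1.2} by the same triangle-inequality calculation as for $\{Q_i\}$ in Theorem \ref{Th1}. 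The lower bound $\Assouad K \geq m$ is immediate from Theorem \ref{Th2}.

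For the upper bound $\Assouad K \leq m$, I would adapt the Theorem \ref{Th1} estimate but exploit that each $T_i$ is now $m$-dimensional: any parallelepiped $T$ of diameter $D$ satisfies $N(T, r) \ll (D/r)^m$ for $r \leq D$ and $N(T, r) = 1$ otherwise. Centering the ball at $0$, with $n(R)$ and $n^+(R)$ defined exactly as in Theorem \ref{Th1}, this gives
\[
N(B(0,R) \cap K, r) \leq 1 + \sum_{i \geq n^+(R)} N(T_i, r) \ll 1 + \frac{1}{r^m} \sum_{i \geq n^+(R)} \frac{1}{2^{im}} \ll \Bigl(\frac{R}{r}\Bigr)^m,
\]
after summing the geometric series and using $2^{-n^+(R)} \ll R$. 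For $x \in T_j$ with $j \neq \infty$ I would split on whether $R < 1/2^{j+3}$ (so $B(x,R) \cap K \subset T_j$ and the direct parallelepiped bound applies) or $R \geq 1/2^{j+3}$ (in which case $B(x,R) \subset B(0, cR)$ for an absolute constant $c$, reducing to the previous case). The main obstacle I expect is the bookkeeping in this last estimate: one must properly separate the indices $i$ for which $T_i$ is still large compared to $r$ (contributing the polynomial term $(1/(r 2^i))^m$) from those for which $T_i$ is already contained in a ball of radius $r$ near the origin, but once that split is chosen the geometric series converges cleanly to $(R/r)^m$ precisely because the exponent $m$ matches the dimension of each $T_i$.
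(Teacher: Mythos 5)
Your proposal is correct and follows essentially the same route as the paper, which likewise replaces each line segment $T_j$ by an $m$-dimensional parallelepiped (the paper calls them diamonds $D_j$, with side parameter $1/(m2^{j+2})$ and orientations enumerated from the $m$-fold product of rational directions) and then declares the verification of (i)--(iii) to be analogous to Theorem \ref{Th1}, omitting the details. Your sketch of the upper bound $\Assouad K\leq m$, including the care needed to separate the indices $i$ with $2^{-i}\lesssim r$ (whose pieces cluster near the origin) from those contributing $(1/(r2^i))^m$ to the geometric series, supplies exactly the omitted bookkeeping and is sound.
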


Similarly from Theorem \ref{Th2}, there does not exist a set which satisfies (i),(ii) in Theorem \ref{Th3} and has Assouad dimension strictly less than $m$. Therefore the example of Theorem \ref{Th3} is of the lowest-possible dimension. 

\begin{proof}Fix integers $m$ and $d$ with $1\leq m\leq d$ and $d\geq 2$. Let $S'=\big\{x/\|x\|\:\colon\: x\in\mathbb{Q}^d\setminus\{0\} \big\}$. We can write
\[
	\underbrace{S'\times \cdots \times S'}_{m}
	 =\{{\xi}_0,{\xi}_1,\xi_2,\ldots\}
\]
since $\mathbb{Q}^d$ is countable. Let $\xi_j=(\xi_{1j},\xi_{2j},\cdots,\xi_{mj})$ for every $j\geq 0$. We define
\[
	D_j=\left\{ t\cdot \xi_j +\frac{1}{2^j}e_1\ 
	\colon\ t_1,\ldots ,t_m \in \left[-\frac{1}{m2^{j+2}}, 	\frac{1}{m2^{j+2}}\right] \right\}
\]
for every $j\geq 0$, where $t=(t_1,\ldots, t_m)$ and $t\cdot \xi_j=t_1\xi_{1j}+\cdots+t_m\xi_{mj}$, and we define
\[
	D_{\infty}=\{0\}.
\]
Note that $D_j$ are $m$-dimensional diamonds whose length of the one side is $1/(m2^{j+1})$. Then let
\[
	K=\bigcup_{j\in \mathbb{N}_0^{*}} D_j
\] 
where $ \mathbb{N}_0^{*}=\{0,1,2,\ldots\}\cup \{\infty\}$. $K$ satisfies (i),(ii) and (iii) in Theorem \ref{Th3}. This proof is similar to the proof of Theorem \ref{Th1}. Therefore we omit the proof.  
\end{proof}

\section{Future works}
 We ask some open questions related to the Kakeya problem. In the proof of Theorem \ref{Th1}, we construct ``countable'' line segments whose union asymptotically and omnidirectionally contains arithmetic progressions. This countability causes the lack of increase of dimension. Thus the answer to the following questions can be expected affirmatively.     

\begin{question}\label{Ques3}
Suppose a compact set $F\subseteq \mathbb{R}^d$ contains $(k,0, \{e\})$-APs for all $k\geq 3$ and $e\in S^{d-1}$. Is it true that the Assouad dimension of $F$ is necessarily equal to $d$? 
\end{question}

\begin{question}\label{Ques4}
Suppose a set $F\subseteq \mathbb{R}^d$ contains $(\infty,0, \{e\})$-APs for all $e\in S^{d-1}$.  
Is it true that the Assouad dimension of $F$ is necessarily equal to $d$?
\end{question}

The affirmative answer to Question \ref{Ques3} would imply the affirmative solution to the Kakeya problem for Assouad dimension. In the case $k=3$, Bourgain gave that a lower bound for the box dimension of a subset of $\mathbb{R}^d$ which contains $(3, 0, \{e\})$-APs for every direction $e \in S^{d-1}$ (see [B,Proposition 1.7]). 

\section*{Acknowledgements}
The author is particularly grateful for the comments given by Neal Bez, Jonathan M.~Fraser, Kohji Mastumoto, Han Yu, and the referee. The author is financially supported by Yoshida Scholarship Foundation.

\end{document}